
\documentclass[12pt,reqno,a4paper]{amsart}%
\usepackage{amsfonts}
\usepackage{amsmath}
\usepackage{amssymb}
\usepackage{mathrsfs}
\usepackage[colorlinks=true,linkcolor=blue,citecolor=red]{hyperref}
\usepackage{graphicx}%
\setcounter{MaxMatrixCols}{30}
\linespread{1.1}
\parskip5pt
\textwidth=455pt \evensidemargin=8pt \oddsidemargin=8pt
\marginparsep=8pt \marginparpush=8pt \textheight=660pt
\topmargin=5pt
\newtheorem{theorem}{Theorem}[section]
\newtheorem{corollary}[theorem]{Corollary}

\theoremstyle{definition}
\newtheorem{definition}[theorem]{Definition}
\theoremstyle{remark}
\newtheorem{remark}[theorem]{Remark}
\numberwithin{equation}{section} \subjclass[2010]{Primary 30C45; Secondary 30C50}
\begin{document}
\keywords {Analytic functions, bi-univalent functions, Fekete-Szeg\"{o} problem, Chebyshev polynomials, coefficient bounds, subordination.}
\title[Fekete-Szeg\"{o} inequality for analytic and bi-univalent functions ...]{Fekete-Szeg\"{o} inequality for analytic and bi-univalent functions subordinate to Chebyshev polynomials}

\author{Feras Yousef}
\address{Feras Yousef: Department of Mathematics, Faculty of Science, The University of Jordan, Amman 11942, Jordan.}
\email{fyousef@ju.edu.jo (\textbf{Corresponding author})}
\author{B. A. Frasin}
\address{B. A. Frasin: Faculty of Science, Department of Mathematics, Al al-Bayt University, Mafraq, Jordan.}
\email{bafrasin@yahoo.com}
\author{Tariq Al-Hawary}
\address{Tariq Al-Hawary: Department of Applied Science, Ajloun College, Al-Balqa Applied University,
Ajloun 26816, Jordan.}
\email{tariq\_amh@yahoo.com}
\begin{abstract}
In the present paper, a new subclass of analytic and bi-univalent functions by means of Chebyshev polynomials is introduced. Certain coefficient bounds for functions belong to this subclass are obtained. Furthermore, the Fekete-Szeg\"{o} problem in this subclass is solved.

\end{abstract}
\maketitle

\section{Introduction}

The classical Chebyshev polynomials of degree $n$ of the first and second kinds, which are denoted respectively by $T_{n}(t)$ and $U_{n}(t)$, have generated a great deal of interest in recent years. These orthogonal polynomials, in a real variable $t$ and a complex variable $z$, have played an important role in applied mathematics, numerical analysis and approximation theory. For this reason, Chebyshev polynomials have been studied extensively, see \cite{Doha,Dz,Mason}. In the study of differential equations, the Chebyshev polynomials of the first and second kinds are the solution to the Chebyshev differential equations %
\begin{equation}
(1-t^2)y''-t y'+ n^2 y = 0
\end{equation}
and
\begin{equation}
(1-t^2)y''-3t y'+ n(n + 2)y = 0,
\end{equation}
respectively. The roots of the Chebyshev polynomials of the first kind are used as nodes in polynomial interpolation and the monic Chebyshev polynomials
minimize all norms among monic polynomials of a given degree. For a brief history of Chebyshev polynomials of the first and second kinds and their applications, the reader is referred to \cite{b,S}.

A classical result of Fekete and Szeg\"{o} \cite{Fek} determines the maximum value of $|a_3-\eta a_2^2|$, as a non-linear functional of the real parameter $\eta$, for the class of normalized univalent functions
\begin{equation*}
f(z)=z+a_{2}z^{2}+a_{3}z^{3}+\cdots.
\end{equation*}

There are now several results of this type in the literature, each of them dealing with $|a_3-\eta a_2^2|$ for various classes of functions defined in terms of subordination (see e.g., \cite{Haw,Sriv}). Motivated by the earlier work of Dziok et al. \cite{Dz}, the main focus of this work is to utilize the Chebyshev polynomials expansions to solve Fekete-Szeg\"{o} problem for certain subclass of bi-univalent functions (see, for example, \cite{F0,F100,Bul,tur}).

This paper is divided into three sections with this introduction being the first. In Section 2, we define the class of analytic and bi-univalent functions $\mathscr{B}_{\Sigma }(\lambda ,\mu ,t)$ using the generating function for the Chebyshev polynomials of the second kind, and we also discuss some other definitions and results. Section 3 is devoted to solve problems concerning the coefficients of functions in the class $\mathscr{B}_{\Sigma }(\lambda ,\mu ,t)$. Section 4 is the main part of the paper, we find the sharp bounds of functionals of Fekete-Szeg\"{o} type.

\section{Definitions and preliminaries}
Let $\mathscr{A}$ denote the class of functions of the form:
\begin{equation} \label{ieq1}
f(z)=z+\sum\limits_{n=2}^{\infty}a_{n}z^{n},
\end{equation}
which are \emph{analytic} in the open unit disk $\mathbb{U}$ $=\{z\in \mathbb{C}$ $:$ $\left\vert z\right\vert <1\}$. Further, by $\mathscr{S}$ we shall denote the class of all functions in $\mathscr{A}$ which are \emph{univalent} in $\mathbb{U}$.

Given two functions $f$, $g$ $\in$ $\mathscr{A}$. The function $f(z)$ is said
to be \emph{subordinate} to $g(z)$ in $\mathbb{U}$, written $f(z)$ $\prec$
$g(z)$, if there exists a Schwarz function $\omega(z)$, analytic in $\mathbb{U}$, with
\begin{center}
$\omega(0) = 0$ and $\left\vert \omega(z)\right\vert <1$ for all $z \in\mathbb{U}$,
\end{center}
such that $f(z)=g\left(\omega(z)\right)$ for all $z \in\mathbb{U}$. Furthermore, if the function
$g$ is univalent in $\mathbb{U}$, then we have the following
equivalence (see \cite{mill} and \cite{fer}):%
\[
f(z)\prec g(z)\Leftrightarrow f(0)=g(0)\text{ and }f(\mathbb{U})\subset
g(\mathbb{U}).
\]

The Koebe one-quarter theorem \cite{Duren} asserts that the image of $\mathbb{U}$ under each univalent function $f$ in $\mathscr{S}$ contains a disk of radius $\frac{1}{4}$. According to this, every function $f\in \mathscr{S}$ has an \emph{inverse map} $f^{-1}$, defined by

\begin{center}
$f^{-1}(f(z))=z \ \ \ (z\in \mathbb{U}),$
\end{center}
and

\begin{center}
$f\left(f^{-1}(w)\right)=w$ $\ \ \ \left( |w|<r_{0}(f);r_{0}(f)\geq\frac{1}{4}\right) $.
\end{center}

In fact, the inverse function is given by
\begin{equation} \label{ieq2}
f^{-1}(w)=w-a_{2}w^{2}+(2a_{2}^{2}-a_{3})w^{3}-(5a_{2}^{3}-5a_{2}a_{3}+a_{4})w^{4}+\cdots.
\end{equation}

A function $f\in \mathscr{A}$ is said to be \emph{bi-univalent} in $\mathbb{U}$ if both $f(z)$ and $f^{-1}(w)$ are univalent in $\mathbb{U}$. Let $\Sigma$ denote the class of bi-univalent functions in $\mathbb{U}$ given by (\ref{ieq1}). For a brief history and some intriguing examples of functions and characterization of the class $\Sigma$, see Srivastava et al. \cite{Sri} and Frasin and Aouf \cite{Fra}, see also  \cite{F1,F2,F10,F5,F3,F4}.

The Chebyshev polynomials of the first and second kinds are orthogonal for $t\in \lbrack -1,1]$ and defined as follows:

\begin{definition}
The Chebyshev polynomials of the first kind are defined by the following three-terms recurrence relation:
\begin{eqnarray*}
&& T_0(t)=1, \nonumber \\
&& T_1(t)=t, \nonumber \\
&&T_{n+1}(t):=2tT_n(t)-T_{n-1}(t).
\end{eqnarray*}
\end{definition}

The first few of the Chebyshev polynomials of the first kind are
\begin{equation}
T_{2}(t)=2t^{2}-1, \ T_{3}(t)=4t^{3}-3t, \ T_{4}(t)=8t^{4}-8t^{2}+1,\cdots
\end{equation}

The generating function for the Chebyshev polynomials of the first kind, $T_{n}(t)$, is given by:
\begin{equation*}
F(z, t) = \frac{1-tz}{1-2tz+z^{2}}=\sum\limits_{n=0}^{\infty }T_{n}(t)z^{n} \ \ \ (z\in
\mathbb{U}).
\end{equation*}

\begin{definition}
The Chebyshev polynomials of the second kind are defined by the following three-terms recurrence relation:
\begin{eqnarray*}
&& U_0(t)=1, \nonumber \\
&& U_1(t)=2t, \nonumber \\
&&U_{n+1}(t):=2tU_n(t)-U_{n-1}(t).
\end{eqnarray*}
\end{definition}

The first few of the Chebyshev polynomials of the second kind are
\begin{equation} \label{ieq5}
U_{2}(t)=4t^{2}-1, \ U_{3}(t)=8t^{3}-4t, \ U_{4}(t)=16t^{4}-12t^{2}+1,\cdots
\end{equation}

The generating function for the Chebyshev polynomials of the second kind, $U_{n}(t)$, is given by:
\begin{equation*}
H(z, t) = \frac{1}{1-2tz+z^{2}}=\sum\limits_{n=0}^{\infty }U_{n}(t)z^{n} \ \ \ (z\in
\mathbb{U}).
\end{equation*}

The Chebyshev polynomials of the first and second kinds are connected by the following relations:
\begin{equation*}
\frac{dT_{n}(t)}{dt}%
=nU_{n-1}(t); \ T_{n}(t)=U_{n}(t)-tU_{n-1}(t); \ 2T_{n}(t)=U_{n}(t)-U_{n-2}(t).
\end{equation*}

\begin{definition}
For $\lambda \geq 1,\mu \geq 0$ and $t\in \left( 1/2,1\right) $, a function $%
f\in \Sigma $ given by (\ref{ieq1}) is said to be in the class $\mathscr{B}_{\Sigma }(\lambda ,\mu ,t)$ if the following
subordinations hold for all $z,w\in \mathbb{U}$:
\begin{equation} \label{ieq3}
\left( 1-\lambda \right) \frac{f(z)}{z}+\lambda f^{\prime }(z)+\mu
zf^{\prime \prime }(z)\prec H(z,t):=\frac{1}{1-2tz+z^{2}}
\end{equation}
and
\begin{equation} \label{ieq4}
\ \left( 1-\lambda \right) \frac{g(w)}{w}+\lambda g^{\prime }(w)+\mu
wg^{\prime \prime }(w)\prec H(w,t):=\frac{1}{1-2tw+w^{2}},
\end{equation}
where the function $g(w)=f^{-1}(w)$ is defined by (\ref{ieq2}).
\end{definition}

\begin{remark}
 \begin{enumerate}
       \item {For $\lambda=1$ and $\mu=0$, we have the class $\mathscr{B}_{\Sigma }(1,0,t):=\mathscr{B}_{\Sigma }(t)$ of functions $f\in \Sigma $ given by (\ref{ieq1}) and satisfying the following subordination conditions for all $z,w\in \mathbb{U}$:
\begin{equation*}
f^{\prime }(z) \prec H(z,t)=\frac{1}{1-2tz+z^{2}}
\end{equation*}
and
\begin{equation*}
\ \  g^{\prime }(w) \prec H(w,t)=\frac{1}{1-2tw+w^{2}}.
\end{equation*}
\newline
This class of functions have been introduced and studied by Altinkaya and Yal\c{c}in \cite{F0}.} \vspace{0.13in}
       \item {For $\mu=0$, we have the class $\mathscr{B}_{\Sigma }(\lambda,0,t):=\mathscr{B}_{\Sigma }(\lambda,t)$ of functions $f\in \Sigma $ given by (\ref{ieq1}) and satisfying the following subordination conditions for all $z,w\in \mathbb{U}$:
\begin{equation*}
\left( 1-\lambda \right) \frac{f(z)}{z}+\lambda f^{\prime }(z) \prec H(z,t)=\frac{1}{1-2tz+z^{2}}
\end{equation*}
and
\begin{equation*}
\ \ \left( 1-\lambda \right) \frac{g(w)}{w}+\lambda g^{\prime }(w) \prec H(w,t)=\frac{1}{1-2tw+w^{2}}.
\end{equation*}

This class of functions have been introduced and studied by Bulut et al. \cite{Bul}.}
     \end{enumerate}
\end{remark}





\section{Coefficient bounds for the function class $\mathscr{B}_{\Sigma }(\lambda ,\mu ,t)$}

We begin with the following result involving initial coefficient bounds for the function class $\mathscr{B}_{\Sigma}\left( \lambda ,\mu ,t\right)$.

\begin{theorem}
\label{thm1} Let the function $f(z)$  given by (\ref{ieq1}) be in the class $\mathscr{B}_{\Sigma}\left( \lambda ,\mu ,t\right)$. Then

\begin{equation} \label{theq1}
|a_{2}|\leq \frac{2t\sqrt{2t}}{\sqrt{\left|(1+ \lambda+2\mu)^2-4t^{2}\left[(\lambda+2\mu)^2-2\mu\right]\right|}}
\end{equation}
and
\begin{equation} \label{theq2}
\hspace{-.7in} |a_{3}|\leq \frac{4t^{2}}{(1+ \lambda+2\mu)^2}+\frac{2t}{1+2\lambda+6\mu}.
\end{equation}

\end{theorem}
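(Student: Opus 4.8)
The plan is to translate the two subordination conditions into coefficient relations through Schwarz functions, and then to combine those relations so as to isolate $a_2^2$ and $a_3$. First I would apply the operator in \eqref{ieq3} to $f(z)=z+\sum_{n\ge2}a_nz^n$ and collect terms, so that the left-hand side becomes $1+(1+\lambda+2\mu)a_2z+(1+2\lambda+6\mu)a_3z^2+\cdots$. By the definition of subordination there is a Schwarz function $\omega(z)=c_1z+c_2z^2+\cdots$ with $|c_n|\le1$ for which this equals $H(\omega(z),t)=1+U_1(t)c_1z+\big(U_1(t)c_2+U_2(t)c_1^2\big)z^2+\cdots$, where $U_1(t)=2t$ and $U_2(t)=4t^2-1$ by the definition of the Chebyshev polynomials and \eqref{ieq5}. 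Matching the coefficients of $z$ and $z^2$ gives
\begin{equation*}
(1+\lambda+2\mu)a_2=2t\,c_1,\qquad (1+2\lambda+6\mu)a_3=2t\,c_2+(4t^2-1)c_1^2.
\end{equation*}
Repeating the computation for $g=f^{-1}$ in \eqref{ieq4}, reading off its coefficients $-a_2$ and $2a_2^2-a_3$ from \eqref{ieq2} and introducing a second Schwarz function $\phi(w)=d_1w+d_2w^2+\cdots$, produces the companion pair
\begin{equation*}
-(1+\lambda+2\mu)a_2=2t\,d_1,\qquad (1+2\lambda+6\mu)(2a_2^2-a_3)=2t\,d_2+(4t^2-1)d_1^2.
\end{equation*}

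Comparing the two first-order relations immediately yields $c_1=-d_1$, hence $c_1^2=d_1^2$. To establish \eqref{theq1} I would add the two second-order relations: the $a_3$ term cancels against $-a_3$, leaving $2(1+2\lambda+6\mu)a_2^2$ on the left, while on the right the $(4t^2-1)$-terms combine to $2(4t^2-1)c_1^2$. Eliminating $c_1^2$ via $c_1^2=(1+\lambda+2\mu)^2a_2^2/(4t^2)$ from the first relation leads to the single identity $a_2^2=4t^3(c_2+d_2)/\big(4(1+2\lambda+6\mu)t^2-(4t^2-1)(1+\lambda+2\mu)^2\big)$. The decisive algebraic simplification is that the denominator equals $(1+\lambda+2\mu)^2-4t^2\big[(\lambda+2\mu)^2-2\mu\big]$, which rests on the identity $(1+\lambda+2\mu)^2-(1+2\lambda+6\mu)=(\lambda+2\mu)^2-2\mu$. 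Applying the Schwarz estimates $|c_2|\le1$ and $|d_2|\le1$ gives $|c_2+d_2|\le2$ and hence the stated bound on $|a_2|$, using $\sqrt{8t^3}=2t\sqrt{2t}$.

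For \eqref{theq2} I would instead subtract the second-order relation for $g$ from that for $f$; since $c_1^2=d_1^2$ the quadratic terms drop out and one obtains $a_3-a_2^2=t(c_2-d_2)/(1+2\lambda+6\mu)$. Substituting $a_2^2=4t^2c_1^2/(1+\lambda+2\mu)^2$ from the first relation and applying the triangle inequality together with $|c_1|\le1$, $|c_2|\le1$, $|d_2|\le1$ then delivers $|a_3|\le 4t^2/(1+\lambda+2\mu)^2+2t/(1+2\lambda+6\mu)$, exactly as claimed. The only genuine obstacle is careful bookkeeping: expanding $H(\omega(z),t)$ correctly to second order, tracking the operator coefficients $(1+\lambda+2\mu)$ and $(1+2\lambda+6\mu)$, and handling the signs induced by $c_1=-d_1$; the rest reduces to the standard coefficient estimate $|c_n|\le1$ for Schwarz functions and the triangle inequality.
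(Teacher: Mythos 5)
Your proposal is correct and follows essentially the same route as the paper: matching coefficients against the Schwarz-function expansions to get the four relations, adding the second-order relations (and eliminating $c_1^2$ via the first-order relation) to bound $|a_2|$, and subtracting them (using $c_1=-d_1$) to write $a_3=a_2^2+\frac{t(c_2-d_2)}{1+2\lambda+6\mu}$ before applying $|c_j|,|d_j|\le 1$. The only cosmetic difference is that you verify the simplification $(1+\lambda+2\mu)^2-(1+2\lambda+6\mu)=(\lambda+2\mu)^2-2\mu$ explicitly, which the paper leaves implicit.
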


\begin{proof}

Let $f\in \mathscr{B}_{\Sigma}\left( \lambda ,\mu ,t\right) $. From (\ref{ieq3}) and (\ref{ieq4}), we have

\begin{equation} \label{eq1}
\left( 1-\lambda \right) \frac{f(z)}{z}+\lambda f^{\prime }(z)+\mu
zf^{\prime \prime }(z)=1+U_{1}(t)w(z)+U_{2}(t)w^{2}(z)+\cdots
\end{equation}
and
\begin{equation} \label{eq2}
\qquad \qquad \left( 1-\lambda \right) \frac{g(w)}{w}+\lambda g^{\prime }(w)+\mu
wg^{\prime \prime }(w)=1+U_{1}(t)v(w)+U_{2}(t)v^{2}(w)+\cdots,
\end{equation}
for some analytic functions

\begin{equation*} \label{eq3}
w(z)=c_{1}z+c_{2}z^{2}+c_{3}z^{3}+\cdots \qquad (z\in \mathbb{U}),
\end{equation*}
and
\begin{equation*} \label{eq4}
v(w)=d_{1}w+d_{2}w^{2}+d_{3}w^{3}+\cdots \quad (w\in \mathbb{U}),
\end{equation*}
such that $w(0)=v(0)=0,$ $|w(z)|<1$ $(z\in \mathbb{U})$ and $|v(w)|<1$ $\ (w\in \mathbb{U}).$

It follows from (\ref{eq1}) and (\ref{eq2}) that

\begin{equation*} \label{eq6}
\left( 1-\lambda \right) \frac{f(z)}{z}+\lambda f^{\prime }(z)+\mu
zf^{\prime \prime }(z)=1+U_{1}(t)c_{1}z+\left[U_{1}(t)c_{2}+U_{2}(t)c_{1}^{2}\right] z^{2}+\cdots
\end{equation*}
and
\begin{equation*} \label{eq7}
\quad \left( 1-\lambda \right) \frac{g(w)}{w}+\lambda g^{\prime }(w)+\mu
wg^{\prime \prime }(w)=1+U_{1}(t)d_{1}w+\left[U_{1}(t)d_{2}+U_{2}(t)d_{1}^{2}\right] )w^{2}+\cdots.
\end{equation*}

A short calculation shows that

\begin{equation} \label{eq8}
\left( 1+\lambda +2\mu \right) a_{2}=U_{1}(t)c_{1},
\end{equation}
\begin{equation} \label{eq9}
\left( 1+2\lambda +6\mu \right) a_{3}=U_{1}(t)c_{2}+U_{2}(t)c_{1}^{2},
\end{equation}
and
\begin{equation} \label{eq10}
-\left( 1+\lambda +2\mu \right) a_{2}=U_{1}(t)d_{1},
\end{equation}
\begin{equation} \label{eq11}
\left( 1+2\lambda +6\mu \right)(2a_{2}^{2}-a_{3})=U_{1}(t)d_{2}+U_{2}(t)d_{1}^{2}.
\end{equation}

From (\ref{eq8}) and (\ref{eq10}), we have

\begin{equation} \label{eq12}
c_{1}=-d_{1},
\end{equation}
and
\begin{equation} \label{eq13}
2\left( 1+\lambda +2\mu \right)^2 a_{2}^2=U_{1}^2(t)\left(c_{1}^2+d_{1}^2\right).
\end{equation}

By adding (\ref{eq9}) to (\ref{eq11}), we get

\begin{equation} \label{eq14}
2\left( 1+2\lambda +6\mu \right) a_{2}^{2}=U_{1}(t)\left(c_{2}+d_{2}\right) +U_{2}(t)\left( c_{1}^{2}+d_{1}^{2}\right).
\end{equation}

By using (\ref{eq13}) in (\ref{eq14}), we obtain

\begin{equation} \label{eq15}
\left[ 2\left( 1+2\lambda +6\mu \right) -\frac{2U_{2}(t)}{%
U_{1}^{2}(t)}(1+\lambda +2\mu )^{2}\right] a_{2}^{2}=U_{1}(t)\left(c_{2}+d_{2}\right).
\end{equation}

It is fairly well known \cite{Duren} that if $|w(z)|<1$ and $|v(w)|<1,$ then

\begin{equation} \label{eq5}
|c_{j}|\leq 1  \ \text{and} \ |d_{j}|\leq 1 \ \text{for all} \ j\in \mathbb{N}.
\end{equation}

By considering (\ref{ieq5}) and (\ref{eq5}), we get from (\ref{eq15}) the desired inequality (\ref{theq1}). \\

Next, by subtracting (\ref{eq11}) from (\ref{eq9}), we have

\begin{equation} \label{eq16}
2\left( 1+2\lambda +6\mu \right) a_{3}-2(1+2\lambda +6\mu)a_{2}^{2}=U_{1}(t)\left( c_{2}-d_{2}\right) +U_{2}(t)\left(c_{1}^{2}-d_{1}^{2}\right).
\end{equation}

Further, in view of (\ref{eq12}), it follows from (\ref{eq16}) that

\begin{equation} \label{eq17}
a_{3}=a_{2}^{2}+\frac{U_{1}(t)}{2(1+2\lambda +6\mu )}\left(c_{2}-d_{2}\right).
\end{equation}

By considering (\ref{eq13}) and (\ref{eq5}), we get from (\ref{eq17}) the desired inequality (\ref{theq2}). This completes the proof of Theorem \ref{thm1}.
\end{proof}

Taking $\lambda =1$ and $\mu = 0$ in Theorem \ref{thm1}, we get the following corollary.

\begin{corollary} \label{cor2} \cite{Bul}
Let the function $f(z)$ given by (\ref{ieq1}) be in the class $\mathscr{B}_{\Sigma}\left( t\right)$. Then

\begin{equation*}
\ |a_{2}|\leq \frac{t\sqrt{2t}}{\sqrt{1-t^{2}}},
\end{equation*}
and
\begin{equation*}
|a_{3}|\leq t^{2}+\frac{2}{3}t.
\end{equation*}
\end{corollary}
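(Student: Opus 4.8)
The plan is to obtain this corollary as a direct specialization of Theorem \ref{thm1}, since by the first part of the preceding Remark the class $\mathscr{B}_{\Sigma}(t)$ is precisely $\mathscr{B}_{\Sigma}(1,0,t)$. Consequently no new estimates are required: I would simply set $\lambda = 1$ and $\mu = 0$ in the two bounds (\ref{theq1}) and (\ref{theq2}) and simplify the resulting expressions.

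First, for the bound on $|a_{2}|$, I would evaluate the quantities appearing in (\ref{theq1}) at $\lambda = 1$, $\mu = 0$, namely $1 + \lambda + 2\mu = 2$ and $(\lambda + 2\mu)^2 - 2\mu = 1$. Substituting these into the radicand gives
\[
\left|(1+\lambda+2\mu)^2 - 4t^{2}\left[(\lambda+2\mu)^2 - 2\mu\right]\right| = \left|4 - 4t^{2}\right| = 4(1-t^{2}),
\]
where the final equality uses $t \in (1/2,1)$, so that $1 - t^{2} > 0$ and the absolute value may be dropped. The bound (\ref{theq1}) then collapses to
\[
|a_{2}| \leq \frac{2t\sqrt{2t}}{\sqrt{4(1-t^{2})}} = \frac{t\sqrt{2t}}{\sqrt{1-t^{2}}},
\]
as claimed.

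Next, for the bound on $|a_{3}|$, I would evaluate the two denominators in (\ref{theq2}): here $(1+\lambda+2\mu)^2 = 4$ and $1 + 2\lambda + 6\mu = 3$. Plugging these in yields
\[
|a_{3}| \leq \frac{4t^{2}}{4} + \frac{2t}{3} = t^{2} + \frac{2}{3}t,
\]
which completes the verification.

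The only point requiring any care is the sign of the expression under the absolute value in the $a_{2}$ estimate; since the hypothesis $t \in (1/2,1)$ forces $1 - t^{2}$ to be strictly positive, the absolute value resolves unambiguously and no case distinction is needed. Otherwise the corollary is a routine substitution, so I expect no genuine obstacle, and the recovery of the bounds of Bulut et al.\ \cite{Bul} functions as a consistency check on Theorem \ref{thm1} rather than as an independent difficulty.
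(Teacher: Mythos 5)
Your proposal is correct and matches the paper exactly: the paper obtains this corollary by the same direct specialization, stating only ``Taking $\lambda =1$ and $\mu = 0$ in Theorem \ref{thm1}, we get the following corollary,'' with the substitution and simplification (including resolving the absolute value via $t\in(1/2,1)$) left implicit. Your worked-out computation of $1+\lambda+2\mu=2$, $(\lambda+2\mu)^2-2\mu=1$, and $1+2\lambda+6\mu=3$ is precisely what that remark entails.
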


For Corollary \ref{cor2}, it's worthy to mention that Altinkaya and Yal\c{c}in \cite{F0} have obtained a remarkable result for the coefficient $|a_{2}|$, as shown in the following corollary.

\begin{corollary}
Let the function $f(z)$ given by (\ref{ieq1}) be in the class $\mathscr{B}_{\Sigma}\left( t\right)$. Then

\begin{equation*}
\ |a_{2}|\leq \frac{t\sqrt{2t}}{\sqrt{1+2t-t^{2}}}.
\end{equation*}

\end{corollary}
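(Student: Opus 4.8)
The plan is to re-run the argument that produced the bound (\ref{theq1}), but specialized to $\lambda = 1$, $\mu = 0$ and with one crucial improvement: I would replace the coarse estimate $|c_2|\le 1$, $|d_2|\le 1$ from (\ref{eq5}) by the sharper Schwarz-function inequality $|c_2|\le 1-|c_1|^2$. This single refinement is exactly what upgrades the denominator from $\sqrt{1-t^2}$ to $\sqrt{1+2t-t^2}$, yielding the stronger bound.

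First I would specialize the relations derived in the proof of Theorem \ref{thm1}. Setting $\lambda=1$, $\mu=0$ and recalling that $U_1(t)=2t$ and $U_2(t)=4t^2-1$, equations (\ref{eq8}) and (\ref{eq10}) give $a_2 = t\,c_1 = -t\,d_1$, so that $|c_1|=|d_1|=|a_2|/t$. Meanwhile, the specialized form of (\ref{eq15}) becomes
\begin{equation*}
\frac{2(1-t^2)}{t^2}\,a_2^2 = 2t\left(c_2+d_2\right),
\end{equation*}
whose leading coefficient is positive since $t\in(1/2,1)$.

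Next I would invoke the refined coefficient bound for Schwarz functions. Because $w(z)/z$ and $v(w)/w$ are analytic and bounded by $1$ in $\mathbb{U}$ (by the Schwarz lemma), the Schwarz--Pick inequality evaluated at the origin yields $|c_2|\le 1-|c_1|^2$ and $|d_2|\le 1-|d_1|^2$. Substituting $|c_1|=|d_1|=|a_2|/t$ then gives
\begin{equation*}
|c_2+d_2|\le |c_2|+|d_2|\le 2-\frac{2|a_2|^2}{t^2}.
\end{equation*}

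Finally, taking absolute values in the specialized (\ref{eq15}) and inserting this estimate leads, after writing $x=|a_2|^2$, to
\begin{equation*}
\frac{2(1-t^2)}{t^2}\,x \le 2t\left(2-\frac{2x}{t^2}\right),
\end{equation*}
which I would clear of denominators and rearrange to $x\bigl(1+2t-t^2\bigr)\le 2t^3$, hence $x\le \dfrac{2t^3}{1+2t-t^2}$, i.e.\ the asserted bound on $|a_2|$. The only genuinely new ingredient, and thus the main obstacle, is recognizing that the sharp estimate $|c_2|\le 1-|c_1|^2$ must be used in place of $|c_2|\le 1$; once that refinement is adopted the remaining computation is purely algebraic.
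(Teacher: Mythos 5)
Your proof is correct, and it is worth noting that the paper itself offers no proof of this corollary at all: the statement is simply quoted from Altinkaya and Yal\c{c}in \cite{F0} as a ``remarkable result,'' while the paper's own machinery (Theorem \ref{thm1} specialized to $\lambda=1$, $\mu=0$, i.e.\ Corollary \ref{cor2}) only yields the weaker denominator $\sqrt{1-t^{2}}$, precisely because the proof of Theorem \ref{thm1} estimates $|c_2|$ and $|d_2|$ by the crude bound (\ref{eq5}). You correctly identified the one ingredient that separates the two bounds: the Schwarz--Pick refinement $|c_2|\le 1-|c_1|^2$, $|d_2|\le 1-|d_1|^2$, combined with the fact that (\ref{eq8}) and (\ref{eq10}) pin down $|c_1|=|d_1|=|a_2|/t$, which feeds the size of $a_2$ back into the estimate and produces the extra $+2t$ in the denominator. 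Your algebra checks out: the specialized (\ref{eq15}) has positive leading coefficient $2(1-t^2)/t^2$ for $t\in(1/2,1)$, and clearing denominators in
\begin{equation*}
\frac{2(1-t^2)}{t^2}\,|a_2|^2 \;\le\; 2t\left(2-\frac{2|a_2|^2}{t^2}\right)
\end{equation*}
indeed gives $|a_2|^2\,(1+2t-t^2)\le 2t^3$, with $1+2t-t^2=2-(1-t)^2>0$, so the division is legitimate. In short, where the paper defers to a citation, you have supplied a complete, self-contained derivation that strengthens the paper's own Corollary \ref{cor2}; the trade-off is that this refinement exploits the specific structure at $\lambda=1$, $\mu=0$ (where $c_1$ is directly proportional to $a_2$), whereas the paper's coarser argument is what generalizes uniformly to all $\lambda,\mu$ in Theorem \ref{thm1}.
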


Taking $\mu = 0$ in Theorem \ref{thm1}, we get the following corollary.

\begin{corollary} \cite{Bul}
Let the function $f(z)$  given by (\ref{ieq1}) be in the class $\mathscr{B}_{\Sigma}\left( \lambda,t\right)$. Then

\begin{equation*}
|a_{2}|\leq \frac{2t\sqrt{2t}}{\sqrt{|(1+ \lambda)^2-4t^{2}\lambda^2|}}
\end{equation*}
and
\begin{equation*}
 |a_{3}|\leq \frac{4t^{2}}{(1+ \lambda)^2}+\frac{2t}{1+2\lambda}.
\end{equation*}
\end{corollary}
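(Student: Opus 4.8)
The plan is to derive this corollary as the special case $\mu = 0$ of Theorem \ref{thm1}. By part (2) of the Remark, the class $\mathscr{B}_{\Sigma}(\lambda, t)$ is by definition $\mathscr{B}_{\Sigma}(\lambda, 0, t)$, so any $f \in \mathscr{B}_{\Sigma}(\lambda, t)$ already lies in the general class with $\mu = 0$, and the conclusions (\ref{theq1}) and (\ref{theq2}) apply verbatim. It then remains only to perform the substitution $\mu = 0$ in both bounds and simplify; no fresh coefficient estimation is needed, since all the analytic work was carried out in Theorem \ref{thm1}.

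For the bound on $|a_{2}|$, I would set $\mu = 0$, which turns the factor $(1+\lambda+2\mu)^2$ into $(1+\lambda)^2$ and the bracket $(\lambda+2\mu)^2 - 2\mu$ into $\lambda^{2}$. Consequently the radicand $\left|(1+\lambda+2\mu)^2 - 4t^2\bigl[(\lambda+2\mu)^2 - 2\mu\bigr]\right|$ collapses to $\left|(1+\lambda)^2 - 4t^2\lambda^2\right|$, and (\ref{theq1}) reduces to the asserted estimate $|a_{2}| \le 2t\sqrt{2t}\,/\sqrt{|(1+\lambda)^2 - 4t^2\lambda^2|}$. For the bound on $|a_{3}|$, the same substitution gives $(1+\lambda+2\mu)^2 = (1+\lambda)^2$ and $1+2\lambda+6\mu = 1+2\lambda$, so the two summands in (\ref{theq2}) become $4t^2/(1+\lambda)^2$ and $2t/(1+2\lambda)$, which is exactly the claimed inequality.

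There is essentially no obstacle here, as the substance is contained in Theorem \ref{thm1}; the only points meriting a line of care are the class identification $\mathscr{B}_{\Sigma}(\lambda, t) = \mathscr{B}_{\Sigma}(\lambda, 0, t)$, which legitimizes the specialization, and the retention of the absolute value in the $|a_{2}|$ radicand, since $(1+\lambda)^2 - 4t^2\lambda^2$ need not be positive across the admissible range $\lambda \geq 1$, $t \in (1/2,1)$ (for large $\lambda$ the leading behavior is $\lambda^2(1-4t^2)$, which is negative once $t > 1/2$). As an alternative route one could re-run the argument of Theorem \ref{thm1} from the outset with $\mu$ deleted — equations (\ref{eq8})--(\ref{eq17}) all persist with the terms $6\mu$ and $2\mu$ removed — but this merely reproduces the substitution above and yields no genuine shortcut.
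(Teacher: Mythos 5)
Your proposal is correct and is exactly the paper's own route: the corollary is obtained by the direct specialization $\mu=0$ in Theorem \ref{thm1}, with the substitutions $(1+\lambda+2\mu)^2\to(1+\lambda)^2$, $(\lambda+2\mu)^2-2\mu\to\lambda^2$, and $1+2\lambda+6\mu\to1+2\lambda$ that you carry out. Your added remark about retaining the absolute value in the radicand (since $(1+\lambda)^2-4t^2\lambda^2$ can be negative for large $\lambda$) is a sensible point of care, fully consistent with the paper.
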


\section{Fekete-Szeg\"{o} inequality for the function class $\mathscr{B}_{\Sigma }(\lambda ,\mu ,t)$}
Now, we are ready to find the sharp bounds of Fekete-Szeg\"{o} functional $a_{3}-\eta a_{2}^{2}$ defined for $f \in  \mathscr{B}_{\Sigma}\left( \lambda ,\mu ,t\right)$ given by (\ref{ieq1}).

\begin{theorem}
\label{thm2} Let the function $f(z)$  given by (\ref{ieq1}) be in the class $\mathscr{B}_{\Sigma}\left( \lambda ,\mu ,t\right)$. Then for some $\eta \in \mathbb{R}$,
\begin{equation} \label{theq3}
|a_{3}-\eta a_{2}^{2}|\leq \left\{
\begin{array}{c}
\qquad \ \ \frac{2t}{1+2\lambda+6\mu}, \qquad \ \qquad \ |\eta -1|\leq M \bigskip \\
\frac{8|\eta -1|t^{3}}{\left|(1+\lambda+2\mu)^2-4t^{2}\left[(\lambda+2\mu)^2-2\mu\right]\right|}, \ \ |\eta -1|\geq M%
\end{array}%
\right.
\end{equation}
where
\begin{equation*}
M:= \frac{\left|(1+\lambda+2\mu)^2-4t^{2}\left[(\lambda+2\mu)^2-2\mu\right]\right|}{4(1+2\lambda+6\mu)t^{2}}.
\end{equation*}

\end{theorem}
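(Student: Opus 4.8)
The plan is to build directly on the two identities already established in the proof of Theorem~\ref{thm1}. Subtracting $\eta a_2^2$ from the relation~(\ref{eq17}) gives
\[
a_3 - \eta a_2^2 = (1-\eta)a_2^2 + \frac{U_1(t)}{2(1+2\lambda+6\mu)}(c_2 - d_2),
\]
so the first step is to solve (\ref{eq15}) for $a_2^2$ and substitute, thereby writing the Fekete-Szeg\"{o} functional as a single linear expression in $c_2$ and $d_2$:
\[
a_3 - \eta a_2^2 = 2t\Big[\big(\Phi(\eta) + \Psi\big)c_2 + \big(\Phi(\eta) - \Psi\big)d_2\Big],
\]
where $\Psi := \tfrac{1}{2(1+2\lambda+6\mu)}$ and $\Phi(\eta)$ collects the contribution of the $a_2^2$ term.

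First I would carry out the algebraic simplification of the denominator appearing in the formula for $a_2^2$. Inserting $U_1(t) = 2t$ and $U_2(t) = 4t^2 - 1$ into the bracketed factor of (\ref{eq15}) and simplifying, one checks that
\[
2(1+2\lambda+6\mu) - \frac{2U_2(t)}{U_1^2(t)}(1+\lambda+2\mu)^2 = \frac{D}{2t^2},
\]
where $D := (1+\lambda+2\mu)^2 - 4t^2\big[(\lambda+2\mu)^2 - 2\mu\big]$ is exactly the quantity appearing under the absolute value in the statement. The observation that makes this work is the polynomial identity $(1+2\lambda+6\mu) - (1+\lambda+2\mu)^2 = 2\mu - (\lambda+2\mu)^2$. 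This computation is the main technical obstacle; once it is in place, one finds $\Phi(\eta) = \tfrac{2(1-\eta)t^2}{D}$.

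The final step is purely estimation. Applying the triangle inequality together with the bounds $|c_2| \le 1$ and $|d_2| \le 1$ from (\ref{eq5}) yields
\[
|a_3 - \eta a_2^2| \le 2t\Big(\big|\Phi(\eta) + \Psi\big| + \big|\Phi(\eta) - \Psi\big|\Big) = 4t\,\max\big\{|\Phi(\eta)|,\,\Psi\big\},
\]
using the elementary identity $|A+B| + |A-B| = 2\max\{|A|,|B|\}$ and the fact that $\Psi > 0$ since $1+2\lambda+6\mu \ge 3 > 0$. Comparing $|\Phi(\eta)| = \tfrac{2|1-\eta|t^2}{|D|}$ with $\Psi$ then splits the estimate into two regimes: the inequality $|\Phi(\eta)| \le \Psi$ is equivalent to $|\eta - 1| \le M$ with $M = \tfrac{|D|}{4(1+2\lambda+6\mu)t^2}$, which produces the constant bound $\tfrac{2t}{1+2\lambda+6\mu}$, whereas the reverse inequality produces the bound $\tfrac{8|\eta-1|t^3}{|D|}$. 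This reproduces precisely the two branches of (\ref{theq3}) and completes the argument. I expect the only delicate point to be the denominator simplification above; the remaining estimation is routine once the functional has been put into linear form.
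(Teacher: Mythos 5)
Your proof is correct and follows essentially the same route as the paper: your $\Phi(\eta)$ is exactly the paper's auxiliary function $h(\eta)$, and both arguments express $a_3-\eta a_2^2$ as $U_1(t)\bigl[(h(\eta)+\Psi)c_2+(h(\eta)-\Psi)d_2\bigr]$ before applying $|c_2|,|d_2|\le 1$ and splitting into the two regimes $|h(\eta)|\lessgtr\Psi$. The only difference is that you make explicit two steps the paper leaves implicit, namely the simplification of the denominator to $D/(2t^2)$ via the identity $(1+2\lambda+6\mu)-(1+\lambda+2\mu)^2=2\mu-(\lambda+2\mu)^2$ and the use of $|A+B|+|A-B|=2\max\{|A|,|B|\}$ — both of which check out.
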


\begin{proof}

Let $f\in \mathscr{B}_{\Sigma}\left( \lambda ,\mu ,t\right) $. By using (\ref{eq15}) and (\ref{eq17}) for some $\eta \in \mathbb{R} $, we get
\begin{equation*}
 a_{3}-\eta a_{2}^{2}=\left( 1-\eta \right) \left[ \frac{U_{1}^{3}(t)\left(c_{2}+d_{2}\right) }{2(1+2\lambda +6\mu )U_{1}^{2}(t)-2(1+\lambda +2\mu
)^{2}U_{2}(t)}\right] +\frac{U_{1}(t)\left( c_{2}-d_{2}\right) }{2(1+2\lambda +6\mu )}
\end{equation*}
\begin{equation*}
\qquad \ \ =U_{1}(t)\left[ \left( h(\eta )+\frac{1}{2(1+2\lambda +6\mu )}\right)
c_{2}+\left( h(\eta )-\frac{1}{2(1+2\lambda +6\mu )}\right) d_2\right],
\end{equation*}
where

\begin{equation*}
 h(\eta )=\frac{U_{1}^{2}(t)\left( 1-\eta \right) }{2\left[ (1+2\lambda
+6\mu )U_{1}^{2}(t)-(1+\lambda +2\mu )^{2}U_{2}(t)\right]}.
\end{equation*}

Then, in view of (\ref{ieq5}), we easily conclude that

\begin{equation*}
|a_{3}-\eta a_{2}^{2}|\leq \left\{
\begin{array}{c}
\frac{2t}{1+2\lambda +6\mu }, \ \ |h(\eta )|\leq \frac{1}{2\left( 1+2\lambda
+6\mu \right) } \bigskip \\
 4|h(\eta )|t, \ \ |h(\eta )|\geq \frac{1}{2\left( 1+2\lambda +6\mu \right) }%
\end{array}%
\right.
\end{equation*}

This proves Theorem \ref{thm2}.
\end{proof}

We end this section with some corollaries concerning the sharp bounds of Fekete-Szeg\"{o} functional $a_{3}-\eta a_{2}^{2}$ defined for $f \in  \mathscr{B}_{\Sigma}\left( \lambda ,\mu ,t\right)$ given by (\ref{ieq1}).

Taking $\eta =1$ in Theorem \ref{thm2}, we get the following corollary.

\begin{corollary}
Let the function $f(z)$ given by (\ref{ieq1}) be in the class $\mathscr{B}_{\Sigma}\left( \lambda ,\mu ,t\right)$. Then
\begin{equation*}
|a_{3}-a_{2}^{2}|\leq \frac{2t}{1+2\lambda +6\mu}.
\end{equation*}

\end{corollary}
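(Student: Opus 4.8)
The plan is to specialize the work already done for Theorem \ref{thm2} to the single value $\eta = 1$, where the Fekete--Szeg\"{o} functional degenerates to its linear part. The cleanest route avoids re-invoking the full piecewise machinery and instead returns to equation (\ref{eq17}) from the proof of Theorem \ref{thm1}, namely $a_{3}=a_{2}^{2}+\frac{U_{1}(t)}{2(1+2\lambda+6\mu)}(c_{2}-d_{2})$. Subtracting $a_2^2$ from both sides gives at once
\[
a_{3}-a_{2}^{2}=\frac{U_{1}(t)}{2(1+2\lambda+6\mu)}\left(c_{2}-d_{2}\right),
\]
so the quadratic term in $a_2$ cancels identically and, in particular, no bound on $|a_2|$ is required.

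From this identity I would apply the triangle inequality together with the Schwarz-function coefficient estimates $|c_{2}|\leq 1$ and $|d_{2}|\leq 1$ recorded in (\ref{eq5}), which yield $|c_{2}-d_{2}|\leq 2$. Since $U_{1}(t)=2t$ for the Chebyshev polynomials of the second kind, this produces
\[
|a_{3}-a_{2}^{2}|\leq \frac{U_{1}(t)}{2(1+2\lambda+6\mu)}\cdot 2=\frac{2t}{1+2\lambda+6\mu},
\]
which is precisely the claimed inequality.

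As a consistency check, the same value can be read directly off the statement of Theorem \ref{thm2}: at $\eta=1$ the quantity $h(\eta)$ carries the factor $1-\eta$ and hence vanishes, so $|h(1)|=0\leq\frac{1}{2(1+2\lambda+6\mu)}$ places us in the constant branch; equivalently $|\eta-1|=0\leq M$ selects the first line of (\ref{theq3}). I do not expect any genuine obstacle, since the derivation is purely a substitution followed by the elementary coefficient bounds; the only point meriting a word is that $M\geq 0$ always holds (its numerator is an absolute value and its denominator is positive because $\lambda\geq 1$, $\mu\geq 0$ and $t\in(1/2,1)$), so $\eta=1$ indeed lands in the first branch.
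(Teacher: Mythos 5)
Your proposal is correct, and in fact it contains the paper's own proof as a subcase: the paper's entire argument for this corollary is your ``consistency check,'' namely substituting $\eta=1$ into Theorem \ref{thm2} and noting that $|\eta-1|=0\leq M$ lands in the constant branch. Your primary route is slightly different in presentation: instead of citing Theorem \ref{thm2}, you return to the identity (\ref{eq17}), observe that $a_3-a_2^2=\frac{U_1(t)}{2(1+2\lambda+6\mu)}(c_2-d_2)$, and finish with $U_1(t)=2t$ and the Schwarz bounds (\ref{eq5}). This is exactly the computation that Theorem \ref{thm2}'s proof performs when $h(\eta)$ vanishes, so the two arguments coincide at their core; what your direct version buys is self-containment and transparency. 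In particular, it makes explicit that no estimate on $|a_2|$ is needed and, more importantly, that the quantity $(1+\lambda+2\mu)^2-4t^2\left[(\lambda+2\mu)^2-2\mu\right]$ appearing in the denominators of Theorem \ref{thm2} plays no role here --- so the bound holds without any concern about that expression being small or zero, a point that the route through the piecewise statement of Theorem \ref{thm2} leaves implicit. The paper's route, on the other hand, is a one-line specialization once the theorem is in hand. Both are valid proofs of the stated inequality.
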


Taking $\lambda =1$ and $\mu = 0$ in Theorem \ref{thm2}, we get the following corollary.

\begin{corollary} \label{corol2}
Let the function $f(z)$ given by (\ref{ieq1}) be in the class $\mathscr{B}_{\Sigma}\left( t\right)$. Then for some $\eta \in \mathbb{R}$,
\begin{equation*}
|a_{3}-\eta a_{2}^{2}|\leq \left\{
\begin{array}{c}
\frac{2}{3}t,\qquad \ |\eta -1|\leq \frac{1-t^{2}}{3t^{2}} \bigskip \\
\frac{2|\eta -1|t^{3}}{1-t^{2}}, \ \ |\eta -1|\geq \frac{1-t^{2}}{3t^{2}}%
\end{array}%
\right.
\end{equation*}

\end{corollary}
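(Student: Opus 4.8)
The plan is to obtain the corollary purely as a specialization of Theorem~\ref{thm2}, since by definition $\mathscr{B}_{\Sigma}(t)=\mathscr{B}_{\Sigma}(1,0,t)$. No fresh estimation is required: every bound asserted in the corollary should fall directly out of (\ref{theq3}) once the parameter values $\lambda=1$ and $\mu=0$ are inserted into the three parameter-dependent quantities of Theorem~\ref{thm2} --- the constant $1+2\lambda+6\mu$, the discriminant-type expression $(1+\lambda+2\mu)^2-4t^2\bigl[(\lambda+2\mu)^2-2\mu\bigr]$, and the threshold $M$ --- and the resulting expressions are simplified.

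First I would record the elementary evaluations at $\lambda=1,\ \mu=0$, namely
\begin{equation*}
1+2\lambda+6\mu = 3, \qquad 1+\lambda+2\mu = 2, \qquad (\lambda+2\mu)^2-2\mu = 1 .
\end{equation*}
Substituting these into the discriminant-type expression gives
\begin{equation*}
(1+\lambda+2\mu)^2-4t^2\bigl[(\lambda+2\mu)^2-2\mu\bigr] = 4-4t^2 = 4(1-t^2).
\end{equation*}
Feeding these values into the two branches of (\ref{theq3}), the first branch collapses to $\frac{2t}{1+2\lambda+6\mu}=\frac{2}{3}t$, the second branch becomes $\frac{8|\eta-1|t^3}{4(1-t^2)}=\frac{2|\eta-1|t^3}{1-t^2}$, and the threshold reduces to
\begin{equation*}
M = \frac{4(1-t^2)}{4(1+2\lambda+6\mu)t^2} = \frac{1-t^2}{3t^2}.
\end{equation*}

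The only step that is not entirely mechanical --- and thus the closest thing here to an obstacle --- is resolving the absolute values surrounding the discriminant-type expression (and $M$) in Theorem~\ref{thm2}. At this point the hypothesis $t\in(1/2,1)$ built into the definition of the class does exactly the needed work: it forces $1-t^2>0$, so that $\bigl|4(1-t^2)\bigr|=4(1-t^2)$ and both the second branch and $M$ emerge with the clean, positive-denominator forms claimed. Once this positivity is invoked, the two cases of (\ref{theq3}) reproduce verbatim the two cases of the corollary, including the coincidence of the switching conditions $|\eta-1|\lessgtr M=\frac{1-t^2}{3t^2}$, which completes the argument.
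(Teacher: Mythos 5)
Your proposal is correct and is exactly the paper's own route: the paper obtains this corollary simply by setting $\lambda=1$ and $\mu=0$ in Theorem \ref{thm2}, and your substitutions ($1+2\lambda+6\mu=3$, discriminant $=4(1-t^2)$, $M=\frac{1-t^2}{3t^2}$) together with the observation that $t\in(1/2,1)$ makes $1-t^2>0$ are precisely the computations that specialization entails.
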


Taking $\eta =1$ in Corollary \ref{corol2}, we get the following corollary.

\begin{corollary} 
Let the function $f(z)$ given be (\ref{ieq1}) be in the class $\mathscr{B}_{\Sigma}\left( t\right)$. Then
\begin{equation*}
|a_{3}-a_{2}^{2}|\leq \frac{2}{3} t.
\end{equation*}

\end{corollary}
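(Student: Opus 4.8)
The plan is to obtain this bound as nothing more than the $\eta=1$ instance of Corollary~\ref{corol2}. Recall that $\mathscr{B}_{\Sigma}(t)=\mathscr{B}_{\Sigma}(1,0,t)$ is the special case $\lambda=1$, $\mu=0$, and that Corollary~\ref{corol2} already records the full Fekete-Szeg\"{o} estimate for this class as a two-branch bound that distinguishes whether $|\eta-1|$ lies below or above the threshold $\frac{1-t^{2}}{3t^{2}}$. So the entire task reduces to substituting $\eta=1$ and identifying which branch applies.

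Setting $\eta=1$ yields $|\eta-1|=0$, and the only point requiring verification is that this places us in the first branch, i.e.\ that $0\leq\frac{1-t^{2}}{3t^{2}}$. Since the class is defined for $t\in(1/2,1)$, we have $t^{2}<1$, hence $1-t^{2}>0$ and the threshold $\frac{1-t^{2}}{3t^{2}}$ is strictly positive; therefore $|\eta-1|=0$ trivially satisfies $|\eta-1|\leq\frac{1-t^{2}}{3t^{2}}$. The first branch of Corollary~\ref{corol2} then gives exactly $|a_{3}-a_{2}^{2}|\leq\frac{2}{3}t$, which is the assertion.

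As an entirely self-contained alternative one can read the bound directly off the coefficient identities established in the proof of Theorem~\ref{thm1}. Specializing \eqref{eq17} to $\lambda=1$, $\mu=0$ (so that $1+2\lambda+6\mu=3$ and $U_{1}(t)=2t$) gives $a_{3}-a_{2}^{2}=\frac{t}{3}\left(c_{2}-d_{2}\right)$, after which the Schwarz-coefficient bounds $|c_{2}|\leq 1$ and $|d_{2}|\leq 1$ from \eqref{eq5} yield $|a_{3}-a_{2}^{2}|\leq\frac{t}{3}\left(|c_{2}|+|d_{2}|\right)\leq\frac{2}{3}t$. Either route is routine; there is no genuine obstacle, the sole subtlety being the trivial sign check that the threshold in Corollary~\ref{corol2} is nonnegative on the admissible range $t\in(1/2,1)$, which guarantees the constant branch is the operative one.
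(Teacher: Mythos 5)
Your proposal is correct and follows exactly the paper's route: the paper obtains this corollary by taking $\eta=1$ in Corollary~\ref{corol2}, which is precisely your main argument (with the added, harmless verification that $|\eta-1|=0$ falls in the constant branch since $\tfrac{1-t^{2}}{3t^{2}}>0$ for $t\in(1/2,1)$). Your self-contained alternative via \eqref{eq17} is also valid but is simply the underlying computation that Theorem~\ref{thm2} already packages, so there is nothing to add.
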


Taking $\mu = 0$ in Theorem \ref{thm2}, we get the following corollary.

\begin{corollary} \label{corol222}
Let the function $f(z)$  given by (\ref{ieq1}) be in the class $\mathscr{B}_{\Sigma}\left( \lambda ,t\right)$. Then for some $\eta \in \mathbb{R}$,
\begin{equation}
|a_{3}-\eta a_{2}^{2}|\leq \left\{
\begin{array}{c}
 \frac{2t}{1+2\lambda}, \quad \qquad \ \ |\eta -1|\leq \frac{\left|(1+\lambda)^2-4t^{2}\lambda^2\right|}{4(1+2\lambda)t^{2}} \bigskip \\
\frac{8|\eta -1|t^{3}}{\left|(1+\lambda)^2-4t^{2}\lambda^2\right|},  \ |\eta -1|\geq \frac{\left|(1+\lambda)^2-4t^{2}\lambda^2 \right|}{4(1+2\lambda)t^{2}}%
\end{array}%
\right.
\end{equation}
\end{corollary}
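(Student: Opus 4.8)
The plan is to obtain this statement as a direct specialization of Theorem \ref{thm2} to the parameter value $\mu = 0$. Recall from the first item of the remark following the definition that $\mathscr{B}_{\Sigma}(\lambda, t)$ is precisely the class $\mathscr{B}_{\Sigma}(\lambda, 0, t)$, so any function $f \in \mathscr{B}_{\Sigma}(\lambda, t)$ is in the hypothesis of Theorem \ref{thm2} with $\mu = 0$. Thus I would invoke Theorem \ref{thm2} verbatim and then carry the substitution $\mu = 0$ through each of the three expressions appearing in its statement.

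Concretely, I would verify the following simplifications. In the first branch, the bound $\frac{2t}{1+2\lambda+6\mu}$ collapses to $\frac{2t}{1+2\lambda}$. In the threshold $M$, the numerator $\left|(1+\lambda+2\mu)^2 - 4t^2\left[(\lambda+2\mu)^2 - 2\mu\right]\right|$ becomes $\left|(1+\lambda)^2 - 4t^2\lambda^2\right|$ once the $\mu$-terms vanish, and the denominator $4(1+2\lambda+6\mu)t^2$ becomes $4(1+2\lambda)t^2$, so $M$ reduces to $\frac{\left|(1+\lambda)^2 - 4t^2\lambda^2\right|}{4(1+2\lambda)t^2}$. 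In the second branch, the same numerator simplification gives the denominator $\left|(1+\lambda)^2 - 4t^2\lambda^2\right|$, leaving $\frac{8|\eta-1|t^3}{\left|(1+\lambda)^2 - 4t^2\lambda^2\right|}$. Each of these is exactly the quantity displayed in the corollary.

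Since the proof is pure substitution, there is no genuine analytic obstacle; the only point requiring care is the bookkeeping inside the absolute value in the numerator, where the bracketed term $(\lambda+2\mu)^2 - 2\mu$ must be evaluated at $\mu = 0$ to give $\lambda^2$ rather than being mishandled. Once that term is correctly reduced to $\lambda^2$, the two piecewise bounds and the cutoff $M$ all match the stated corollary term by term, completing the argument.
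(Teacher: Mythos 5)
Your proposal is correct and follows exactly the paper's own route: the paper derives Corollary \ref{corol222} by the single line ``Taking $\mu = 0$ in Theorem \ref{thm2}'', and your term-by-term verification of the substitution (including the reduction of $(\lambda+2\mu)^2-2\mu$ to $\lambda^2$) is precisely what that specialization requires. One trivial slip: the identification $\mathscr{B}_{\Sigma}(\lambda,t)=\mathscr{B}_{\Sigma}(\lambda,0,t)$ is stated in item (2) of the remark, not item (1), which concerns the case $\lambda=1$, $\mu=0$.
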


Taking $\eta =1$ in Corollary \ref{corol222}, we get the following corollary.

\begin{corollary} 
Let the function $f(z)$ given by (\ref{ieq1}) be in the class $\mathscr{B}_{\Sigma}\left( \lambda ,t\right)$. Then
\begin{equation*}
|a_{3}-a_{2}^{2}|\leq \frac{2t}{1+2\lambda}.
\end{equation*}

\end{corollary}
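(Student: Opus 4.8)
The plan is to obtain this bound as the immediate specialization $\eta = 1$ of the two-regime Fekete--Szeg\"{o} estimate already established in Corollary \ref{corol222}. First I would recall that, for $f \in \mathscr{B}_{\Sigma}(\lambda, t)$ and \emph{any} real $\eta$, Corollary \ref{corol222} furnishes a piecewise bound for $|a_3 - \eta a_2^2|$ whose two branches are separated by the threshold
\[
M := \frac{\left|(1+\lambda)^2 - 4t^2 \lambda^2\right|}{4(1+2\lambda)t^2}.
\]
The entire argument reduces to identifying which branch is active when $\eta = 1$.

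Next I would substitute $\eta = 1$, so that $|\eta - 1| = 0$. The key observation is that $M$ is \emph{nonnegative}: its numerator is an absolute value, and its denominator $4(1+2\lambda)t^2$ is strictly positive since the defining constraints $\lambda \geq 1$ and $t \in (1/2, 1)$ force $1 + 2\lambda > 0$ and $t^2 > 0$. Consequently the inequality $|\eta - 1| = 0 \leq M$ holds unconditionally, which places us squarely in the first regime of Corollary \ref{corol222}. That branch gives the constant bound $|a_3 - \eta a_2^2| \leq \frac{2t}{1+2\lambda}$, which is independent of $\eta$; evaluating at $\eta = 1$ delivers the asserted inequality $|a_3 - a_2^2| \leq \frac{2t}{1+2\lambda}$.

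There is essentially no obstacle in this deduction, since it is a pure specialization of an already-proved result. The only point demanding (trivial) verification is the nonnegativity of $M$, which guarantees that the value $\eta = 1$ lands in the constant branch rather than the linear-growth branch. Once this is noted, the bound follows directly, and no coefficient computation needs to be revisited.
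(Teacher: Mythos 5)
Your proposal is correct and coincides with the paper's own argument: the paper obtains this corollary precisely by setting $\eta = 1$ in Corollary \ref{corol222}, which forces $|\eta - 1| = 0 \leq M$ and hence the constant branch $\frac{2t}{1+2\lambda}$. Your explicit check that $M \geq 0$ (positive denominator since $\lambda \geq 1$ and $t \in (1/2,1)$, nonnegative numerator by the absolute value) is the only detail the paper leaves implicit.
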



\end{document}